\documentclass{article}
\usepackage[utf8]{inputenc}
\usepackage{amssymb}
\usepackage{amsthm}
\usepackage{amsfonts,amsmath,amsbsy,latexsym,mathrsfs
}
\usepackage{authblk}
\newtheorem{thm}{Theorem}
\newtheorem{cor}[thm]{Corollary}
\newtheorem{lem}[thm]{Lemma}

\newtheorem{defi}[thm]{Definition}

\title{Energy of a graph and Randic index}

\author{Gerardo Arizmendi and Octavio Arizmendi\thanks{G.A. received support from CONACyT grant 256126.  O.A. received support from Conacyt Grant CB-2017-2018-A1-S-9764 an from the European Union's Horizon 2020 research and innovation programme under the Marie Sk\l{}odowska-Curie grant agreement No 734922.\\~}}

\date{ \today}

\begin{document}

\maketitle

\begin{abstract}
We prove that, for any graph $G$, its graph energy is at least twice the Randic index. We show that equality holds if and only if  $G$ is the  union of complete bipartite graphs.
\end{abstract}

\section{Introduction}

Among graph descriptors used in mathematical chemistry, two of them play a rather important role and have attracted a lot of interest: The graph energy  \cite{Gut,GR,LSG} and the Randic Index \cite{Ran}. The first one comes from spectral graph theory, while the second one is of vertex-degree-based  nature.

There are plenty of inequalities for each of these descriptors. However, so far, no direct relation between them has been established.\footnote{ To the best of our knowledge the only result in this direction is \cite{Jah}, but the result there involves different quantities.}

In this paper, we prove a very precise relation between this two quantities, the graph energy is at least twice the Randic index. Our proof is based on \emph{vertex energy}, a refinement of the graph energy which was introduced in \cite{AJ}, in order to understand locally the properties of the latter.  We also show that equality holds if and only if  the graph consists of a union of complete bipartite graphs.

Apart from this introduction the paper contains two short sections. In Section 2, we introduce the notions of energy and vertex energy and show that the product of energies of two adjacent vertices is greater or equal to one. In Section 3 we prove the main result of the paper.

\section{An inequality for vertex energy}

\subsection{Graph energy and vertex energy} 

Let us consider a graph $G=(V,E)$ with vertex set $V=\{v_1,...,v_n\}$ and adjacency matrix $A\in M_n(\mathbb{R})$.  The \emph{energy of the graph} \cite{Gut,GR,LSG} is given by 
\begin{equation*}
\mathcal{E}(G)=\sum^n_{i=0} |\lambda_i|. 
\end{equation*}
A graph of size $n$ is called hypoenergetic if $\mathcal E(G)<n$, \cite{Gut2} and orderenergetic if $E(G)=n$ \cite{AGK}.

If, for a matrix $M$, we denote its trace by $Tr(M)$, and its absolute value $(MM^*)^{1/2}$, by $|M|$, then the energy of $G$ is given by
\begin{equation*}
\mathcal{E}(G)= Tr(|A(G)|)=\displaystyle\sum_{i=1}^{n}|A(G)|_{ii}.
\end{equation*}
In \cite{AJ}, the energy of a vertex is introduced.
\begin{defi}
The energy of the vertex $v_i$ with respect to $G$, which is denoted by $\mathcal{E}_G(v_i)$, is given by
\begin{equation}
  \mathcal{E}_G(v_i)=|A(G)|_{ii}, \quad\quad~~~\text{for } i=1,\dots,n,
\end{equation}
where $|A|=(AA^*)^{1/2}$ and $A$ is the adjacency matrix of $G$.
\end{defi}

In this way the energy of a graph is given by the sum of the individual energies of the vertices of $G$,
\begin{equation*}
  \mathcal{E}(G)=\mathcal{E}_G(v_1)+\cdots+\mathcal{E}_G(v_n),
\end{equation*}
and thus the energy of a vertex is a refinement of the energy of a graph.

The following lemma  tells us how to calculate the energy of a vertex in terms of the eigenvalues and eigenvectors of $A$.
\begin{lem}{\rm \cite{AFJ}} \label{L1} Let $G=(V,E)$ be a graph with vertices $v_1,...,v_n$. Then
\begin{equation}\label{Equa2}
  \mathcal{E}_{G}(v_i)=\displaystyle\sum_{j=1}^{n}p_{ij}|\lambda_j|,\quad i=1,\ldots,n
\end{equation}
where $\lambda_j$ denotes the $j$-eigenvalue of the adjacency matrix of $A$ and the weights $p_{ij}$ are given by $p_{ij}=u_{ij}^{2}$ where $U = (u_{ij})$ is  the orthogonal matrix whose columns are given by the eigenvectors of $A$.
\end{lem}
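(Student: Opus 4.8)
The plan is to exploit the fact that the adjacency matrix $A$ of a graph is real and symmetric, so that the absolute value $|A| = (AA^*)^{1/2}$ can be computed explicitly from the spectral decomposition. First I would observe that since $A = A^* = A^T$, we have $AA^* = A^2$, and hence $|A| = (A^2)^{1/2}$, where the square root denotes the unique positive semidefinite square root of a positive semidefinite matrix.

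Next I would diagonalize. Because $A$ is real symmetric, the spectral theorem provides an orthogonal matrix $U = (u_{ij})$ whose columns are the eigenvectors of $A$, such that $A = U \Lambda U^T$ with $\Lambda = \mathrm{diag}(\lambda_1, \ldots, \lambda_n)$. Squaring gives $A^2 = U \Lambda^2 U^T = U \, \mathrm{diag}(\lambda_1^2, \ldots, \lambda_n^2) \, U^T$. I would then check that the matrix $U \, \mathrm{diag}(|\lambda_1|, \ldots, |\lambda_n|) \, U^T$ is positive semidefinite and that its square equals $A^2$. By uniqueness of the positive semidefinite square root, this forces $|A| = U \, |\Lambda| \, U^T$, where $|\Lambda| = \mathrm{diag}(|\lambda_1|, \ldots, |\lambda_n|)$.

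Finally I would read off the diagonal entry. Expanding the matrix product, the $(i,i)$ entry of $U |\Lambda| U^T$ equals $\sum_{j=1}^n u_{ij} |\lambda_j| (U^T)_{ji} = \sum_{j=1}^n u_{ij}^2 |\lambda_j|$, which is precisely $\sum_{j=1}^n p_{ij} |\lambda_j|$ by the definition $p_{ij} = u_{ij}^2$. Since $\mathcal{E}_G(v_i) = |A|_{ii}$ by definition, this yields the claimed identity.

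The computation itself is routine; the only point that deserves care is the identification $(A^2)^{1/2} = U |\Lambda| U^T$, that is, verifying that the square-root functional calculus sends each eigenvalue $\lambda_j$ to $|\lambda_j|$ rather than back to $\lambda_j$. This is exactly where the absolute values enter, and it hinges on selecting the positive semidefinite branch of the square root. I expect this to be the main conceptual step, even though it is elementary.
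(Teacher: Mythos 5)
Your proof is correct and is the standard argument: the paper itself states this lemma without proof, citing \cite{AFJ}, and your spectral-decomposition computation (uniqueness of the positive semidefinite square root giving $|A| = U\,\mathrm{diag}(|\lambda_1|,\dots,|\lambda_n|)\,U^T$, then reading off the diagonal entry) is precisely the expected derivation. You correctly identify the one point needing care, namely that the functional calculus sends $\lambda_j$ to $|\lambda_j|$, and you justify it properly.
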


\subsection{Energy of adjacent vertices} 

The next theorem is fundamental for the proof of our main result and is of independent interest.

\begin{thm}\label{T1}
Let $v_i$ and $v_j$ be connected vertices of a simple (undirected) graph G. Then ${\cal E}(v_i){\cal E}(v_j)\geq1$. 
\end{thm}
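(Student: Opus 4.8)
The plan is to work directly from the spectral representation of the energies supplied by Lemma \ref{L1}, and to exploit the fact that, because $G$ is simple and undirected, its adjacency matrix $A$ is symmetric and satisfies $A_{ij}=1$ exactly when $v_i$ and $v_j$ are adjacent. Writing $A=U\Lambda U^{T}$ with $U=(u_{ik})$ orthogonal and $\Lambda=\mathrm{diag}(\lambda_1,\dots,\lambda_n)$, I would record the three expressions I need: on the one hand $A_{ij}=\sum_{k}\lambda_k u_{ik}u_{jk}$, and on the other hand, by Lemma \ref{L1}, $\mathcal{E}_G(v_i)=\sum_k|\lambda_k|u_{ik}^2$ and $\mathcal{E}_G(v_j)=\sum_k|\lambda_k|u_{jk}^2$. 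The key observation is that the \emph{same} spectral weights govern all three sums, appearing with the signed eigenvalues $\lambda_k$ in the off-diagonal entry and with $|\lambda_k|$ in the two vertex energies.

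From here the argument is short. Since $v_i$ and $v_j$ are adjacent we have $1=A_{ij}=\sum_k \lambda_k u_{ik}u_{jk}$, and passing to absolute values termwise gives $1 \le \sum_k |\lambda_k|\,|u_{ik}|\,|u_{jk}|$. I would then apply the Cauchy--Schwarz inequality to the positive semidefinite bilinear form $\langle x,y\rangle=\sum_k|\lambda_k|x_k y_k$, taking $x_k=|u_{ik}|$ and $y_k=|u_{jk}|$, which yields $\sum_k|\lambda_k|\,|u_{ik}|\,|u_{jk}| \le \big(\sum_k|\lambda_k|u_{ik}^2\big)^{1/2}\big(\sum_k|\lambda_k|u_{jk}^2\big)^{1/2}$. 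The right-hand side is exactly $\sqrt{\mathcal{E}_G(v_i)\mathcal{E}_G(v_j)}$, so chaining the two inequalities gives $1\le \sqrt{\mathcal{E}_G(v_i)\mathcal{E}_G(v_j)}$, which squares to the claim.

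The only step requiring real care is the interface between the signed and unsigned sums: the lower bound $1$ comes from the entry $A_{ij}$, which carries the signs of the $\lambda_k$, whereas the quantities we ultimately want are built from $|\lambda_k|$. The termwise triangle inequality is precisely what replaces the signed weights by their absolute values at no cost, making the weighted Cauchy--Schwarz applicable. I do not anticipate any genuine obstacle beyond this bookkeeping; in particular the bilinear form is only positive semidefinite (vanishing eigenvalues contribute nothing), but Cauchy--Schwarz holds for semidefinite forms, so degeneracy causes no trouble. As a bonus, tracking the equality conditions along the way — the triangle inequality forcing the nonzero terms $\lambda_k u_{ik}u_{jk}$ to share a common sign, and Cauchy--Schwarz forcing proportionality of $(|u_{ik}|)$ and $(|u_{jk}|)$ on the support of $|\lambda_k|$ — should supply exactly the structural information later needed for the equality analysis in the main theorem.
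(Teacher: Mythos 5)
Your proposal is correct and follows essentially the same route as the paper: both expand $A_{ij}=\sum_k\lambda_k u_{ik}u_{jk}=1$ and the two vertex energies in the common spectral basis and then apply Cauchy--Schwarz with weights $|\lambda_k|$. The only (harmless) difference is bookkeeping of signs: the paper absorbs $\mathrm{sign}(\lambda_k)$ into one of the two vectors so that the inner product equals $A_{ij}$ exactly, whereas you first pass to absolute values by the termwise triangle inequality and then apply Cauchy--Schwarz to the unsigned sums.
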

\begin{proof}

 Let $A(G)$ be the adjacency matrix of $G$. Then we can write $A(G)=UDU^t$, where $U=(u_{kl})$ is orthogonal and $D=(d_{kl})$ is a diagonal matrix such that $d_{kk}=\lambda_k$, with  $\lambda_k$'s the eigenvalues of $A(G)$. Hence, by Lemma \ref{L1},  ${\cal E}(v_i)=\sum_ku_{ik}^2|\lambda_k|$ and ${\cal E}(v_j)=\sum_ku_{jk}^2|\lambda_k|$. Moreover $A(G)_{ij}=\sum_ku_{ik}u_{jk}\lambda_k$. Since $v_i$ and $v_j$ are connected then $A(G)_{ij}=1$.

Now consider $$v=(u_{i1}\sqrt{|\lambda_1|},\dots,u_{in}\sqrt{|\lambda_n|})$$ 
and
$$w=(u_{j1}sign(\lambda_1)\sqrt{|\lambda_1|},\dots,u_{jn}sign(\lambda_n)\sqrt{|\lambda_n|})$$ 
then 
$$\left<v,w\right>^2=(\sum_ku_{ik}u_{jk}\lambda_k)^2=1$$
$$||v||^2=\sum_ku_{ik}^2|\lambda_k|={\cal E}(v_i)$$
$$||w||^2=\sum_ku_{jk}^2|\lambda_k|={\cal E}(v_j)$$
which proves the assertion by the Cauchy-Schwarz inequality.
\end{proof}

%\noindent \textbf{Remark.}
%For a positive matrix it is well known that $\sqrt{m_{ii}m_{jj}}\geq|m_{ij}|.$
%que dice esto?

By the use of AM-GM inequality we observe that:
\begin{cor}
Let $v_i$ and $v_j$ be connected vertices of a simple (undirected) graph G. Then ${\mathcal E}(v_i)+{\mathcal E}(v_j)\geq2$.
\end{cor}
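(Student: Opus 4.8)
The plan is to derive this corollary directly from Theorem \ref{T1}, since the substantive spectral work has already been carried out there. The only new ingredient needed is the arithmetic–geometric mean inequality, together with the observation that vertex energies are nonnegative.

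First I would record the nonnegativity of the two quantities. Since $\mathcal{E}_G(v_i)=|A|_{ii}$ is a diagonal entry of the positive semidefinite matrix $|A|=(AA^*)^{1/2}$, each vertex energy satisfies $\mathcal{E}_G(v_i)\geq 0$; this is exactly the hypothesis under which AM--GM applies. Next I would invoke Theorem \ref{T1}, which gives $\mathcal{E}_G(v_i)\,\mathcal{E}_G(v_j)\geq 1$ for the adjacent pair $v_i,v_j$.

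The conclusion then follows in one step. Applying AM--GM to the nonnegative reals $\mathcal{E}_G(v_i)$ and $\mathcal{E}_G(v_j)$ yields
\[
\frac{\mathcal{E}_G(v_i)+\mathcal{E}_G(v_j)}{2}\;\geq\;\sqrt{\mathcal{E}_G(v_i)\,\mathcal{E}_G(v_j)}\;\geq\;\sqrt{1}\;=\;1,
\]
where the second inequality is precisely Theorem \ref{T1}. Multiplying through by $2$ gives $\mathcal{E}_G(v_i)+\mathcal{E}_G(v_j)\geq 2$, as claimed.

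There is no real obstacle here: the corollary is an immediate consequence of the already-established product bound, and the proof is essentially a single line. If one wanted to track the equality case, it is worth noting that equality would force both $\mathcal{E}_G(v_i)\,\mathcal{E}_G(v_j)=1$ (the equality condition of Theorem \ref{T1}, i.e.\ Cauchy--Schwarz with $v,w$ proportional) and $\mathcal{E}_G(v_i)=\mathcal{E}_G(v_j)$ (the equality condition of AM--GM), so that both vertex energies equal $1$; but establishing this is not needed for the stated inequality itself.
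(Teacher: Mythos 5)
Your proof is correct and follows exactly the paper's approach: the authors also obtain this corollary by applying the AM--GM inequality to the product bound of Theorem \ref{T1}. Your added remark that $\mathcal{E}_G(v_i)\geq 0$ (as a diagonal entry of the positive semidefinite matrix $|A|$) is a worthwhile detail the paper leaves implicit.
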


Recall that a matching in a graph is a set of edges that are pair-wise non-adjacent.
\begin{cor}
If $G$ has a matching of size $k$, then $\mathcal E (G)\geq2k$. In particular, if $G$ has a complete matching then it is not hypoenergetic.
\end{cor}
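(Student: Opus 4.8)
The plan is to reduce this to the preceding corollary by summing over the edges of the matching. Let $M = \{e_1,\dots,e_k\}$ be a matching of size $k$, and write each edge as $e_l = \{v_{a_l}, v_{b_l}\}$. The defining feature of a matching is that its edges are pairwise non-adjacent, which means the $2k$ endpoints $v_{a_1}, v_{b_1}, \dots, v_{a_k}, v_{b_k}$ are all distinct vertices of $G$. This distinctness is the one structural fact I will need, and it is exactly what lets me add up the per-edge bounds without double-counting any vertex energy.

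Since the two endpoints of each edge $e_l$ are connected vertices, the previous corollary gives $\mathcal{E}(v_{a_l}) + \mathcal{E}(v_{b_l}) \geq 2$ for every $l = 1,\dots,k$. Summing over the $k$ matching edges yields
\begin{equation*}
\sum_{l=1}^{k} \bigl( \mathcal{E}(v_{a_l}) + \mathcal{E}(v_{b_l}) \bigr) \geq 2k,
\end{equation*}
where, by the matching property, the left-hand side is a sum over $2k$ \emph{distinct} vertices.

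To pass from this partial sum to the full graph energy I would invoke the representation $\mathcal{E}(G) = \sum_{i=1}^{n} \mathcal{E}_G(v_i)$ together with the fact that each vertex energy is non-negative: since $\mathcal{E}_G(v_i) = |A|_{ii}$ is a diagonal entry of the positive semidefinite matrix $|A| = (AA^*)^{1/2}$, we have $\mathcal{E}_G(v_i) \geq 0$. Discarding the (non-negative) contributions of the vertices not covered by the matching therefore only decreases the sum, so
\begin{equation*}
\mathcal{E}(G) = \sum_{i=1}^{n} \mathcal{E}_G(v_i) \geq \sum_{l=1}^{k} \bigl( \mathcal{E}(v_{a_l}) + \mathcal{E}(v_{b_l}) \bigr) \geq 2k.
\end{equation*}

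For the final assertion, a complete (perfect) matching covers all $n$ vertices, so it has size $k = n/2$; the inequality above then reads $\mathcal{E}(G) \geq 2 \cdot (n/2) = n$, and by definition $G$ is not hypoenergetic. There is no genuine obstacle here — the result is essentially immediate once the previous corollary is in hand — but the two points deserving care are the observation that a matching guarantees distinct endpoints (so the summation is legitimate) and the non-negativity of vertex energies (so that restricting to matched vertices can only lower the total).
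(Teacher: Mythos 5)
Your proof is correct and follows exactly the argument the paper intends (the corollary is stated there without proof, as an immediate consequence of the preceding corollary): sum the bound $\mathcal{E}(v)+\mathcal{E}(w)\geq 2$ over the $k$ matching edges, note the $2k$ endpoints are distinct, and discard the remaining non-negative vertex energies. Nothing to add.
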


\section{Randic index and energy of graphs}

We now prove the main theorem of the paper. For a vertex $v$, we will denote by $deg(v)$, the degree of the vertex $v$, i.e. the number of neighbours of $v$. Let us recall that for a graph $G=(V,E)$ the Randic index is given by
 $$R(G)=\sum_{(v,w)\in E} \frac{1}{\sqrt{{deg(v)deg(w)}}}.$$

\begin{thm}\label{mainT}
Let $G$ be a graph with graph energy $\mathcal{E}(G)$ and Randic index $R(G)$ then $\mathcal{E}(G)\geq 2R(G)$.  
\end{thm}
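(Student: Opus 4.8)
The plan is to deduce the main inequality $\mathcal{E}(G)\geq 2R(G)$ directly from Theorem~\ref{T1} by summing a pointwise estimate over the edges of $G$. The key observation is that the Randic index is a sum over edges of $1/\sqrt{\deg(v)\deg(w)}$, while the graph energy decomposes as a sum over vertices of vertex energies; so I would first rewrite $2R(G)$ as a sum over vertices and compare term by term. Concretely, I would try to prove the \emph{local} statement that for every vertex $v$,
\begin{equation*}
\mathcal{E}_G(v)\geq \sum_{w\sim v}\frac{1}{\sqrt{\deg(v)\deg(w)}},
\end{equation*}
where the sum runs over the neighbours $w$ of $v$. Summing this inequality over all $v\in V$ and noting that each edge $(v,w)$ contributes its term once at $v$ and once at $w$ would give $\mathcal{E}(G)=\sum_v \mathcal{E}_G(v)\geq 2R(G)$, which is exactly the claim.

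To establish the local inequality, the natural tool is Theorem~\ref{T1}, which gives $\mathcal{E}_G(v)\mathcal{E}_G(w)\geq 1$, i.e. $1/\mathcal{E}_G(w)\leq \mathcal{E}_G(v)$ for every neighbour $w$ of $v$. A first attempt would be to bound each edge term $1/\sqrt{\deg(v)\deg(w)}$ using the fact that $\sqrt{\mathcal{E}_G(v)\mathcal{E}_G(w)}\geq 1$, but this by itself does not obviously produce the $1/\sqrt{\deg(v)\deg(w)}$ weights. The cleaner route is probably to first record the auxiliary bound that a vertex energy never exceeds something controlled by its degree --- and, dually, is bounded below in a way that interacts with Theorem~\ref{T1}. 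I expect the argument to hinge on combining Theorem~\ref{T1} with Cauchy--Schwarz (or the AM--GM step already used in the corollaries) applied to the vector of neighbours of a fixed vertex.

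The step I expect to be the main obstacle is precisely linking the spectral quantity $\mathcal{E}_G(v)$ to the degree $\deg(v)$ in the right direction. The inequality $\mathcal{E}_G(v)\mathcal{E}_G(w)\geq 1$ only controls products over single edges, whereas the target involves a sum over all neighbours of $v$ weighted by $1/\sqrt{\deg(v)\deg(w)}$, so I will need a mechanism that simultaneously handles all edges incident to $v$. The most promising idea is to apply the Cauchy--Schwarz argument from the proof of Theorem~\ref{T1} in aggregate: for a fixed $v$, consider the vector $v=(u_{vk}\sqrt{|\lambda_k|})_k$ of norm-squared $\mathcal{E}_G(v)$ together with the whole family of neighbour vectors $w^{(j)}$, and exploit that each inner product $\langle v, w^{(j)}\rangle^2 = 1$. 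Summing these and dividing appropriately should let the degree $\deg(v)$ enter naturally as the number of such unit inner products, thereby producing the $1/\sqrt{\deg(v)\deg(w)}$ weights after a final application of AM--GM or Cauchy--Schwarz. Verifying that this aggregation is tight exactly for unions of complete bipartite graphs (the claimed equality case) will be the delicate part, but for the inequality itself the summation-over-edges strategy anchored in Theorem~\ref{T1} should suffice.
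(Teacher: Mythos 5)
You have correctly set up a reduction: if the per-vertex inequality $\mathcal{E}_G(v)\geq \sum_{w\sim v}(\deg(v)\deg(w))^{-1/2}$ holds for every $v$, then summing over $V$ does yield $\mathcal{E}(G)\geq 2R(G)$, since each edge is counted once from each endpoint. The problem is that you never prove this local inequality, and you say so yourself (``the step I expect to be the main obstacle is precisely linking $\mathcal{E}_G(v)$ to $\deg(v)$''). The aggregation you propose does not close the gap: if you take the vector $x_v=(u_{vk}\sqrt{|\lambda_k|})_k$ and pair it against a linear combination $\sum_{w\sim v}c_w y_w$ of the neighbour vectors, Cauchy--Schwarz gives $\mathcal{E}_G(v)\geq \bigl(\sum_w c_w\bigr)^2/\bigl\|\sum_w c_w y_w\bigr\|^2$, and the denominator contains the cross terms $\langle y_w,y_{w'}\rangle=|A|_{ww'}$ for distinct neighbours $w,w'$ of $v$. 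These off-diagonal entries of $|A|$ are not controlled by degrees or by Theorem~\ref{T1}, so the degree weights $(\deg(v)\deg(w))^{-1/2}$ never actually ``enter naturally''; no step in your outline produces them. As written, the proposal is a plan with its central step missing, not a proof.

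The paper avoids this obstacle entirely by localizing at \emph{edges} rather than at vertices: for $e=(v,w)$ it sets $\mathcal{E}(e)=\mathcal{E}(v)/\deg(v)+\mathcal{E}(w)/\deg(w)$, checks that $\sum_{e\in E}\mathcal{E}(e)=\mathcal{E}(G)$ (each vertex $v$ lies on $\deg(v)$ edges, so its total contribution is exactly $\mathcal{E}(v)$), and then bounds each edge term by AM--GM: $\mathcal{E}(e)\geq 2\sqrt{\mathcal{E}(v)\mathcal{E}(w)/(\deg(v)\deg(w))}\geq 2(\deg(v)\deg(w))^{-1/2}$, using only the single-edge inequality $\mathcal{E}(v)\mathcal{E}(w)\geq 1$ of Theorem~\ref{T1}. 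Note how dividing by the degrees \emph{before} applying AM--GM is what makes the Randic weights appear for free, with no need to control all neighbours of a vertex simultaneously. (Your stronger vertex-local inequality is in fact true, but it requires a separate argument; it is not a consequence of Theorem~\ref{T1} by the aggregation you describe.) If you want to salvage your write-up with minimal change, replace the vertex-local claim by this edge-local one.
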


\begin{proof}
Let $G=(V,E)$. For an edge $e=(v,w)$ define $\mathcal{E}(e)=\mathcal{E}(v)/deg(v)+\mathcal{E}(w)/deg(w)$.
Then, on one hand, \begin{eqnarray*}
\sum_{e\in E}\mathcal{E}(e)&=&\sum_{e\in E}\left( \frac{\mathcal{E}(v)}{deg(v)}+\frac{\mathcal{E}(w)}{deg(w)}\right)\\&=&\sum_{(v,w)\in E} \frac{\mathcal{E}(v)}{deg(v)}+\sum_{(v,w)\in E}\frac{\mathcal{E}(w)}{deg(w)}
\\&=&\frac{1}{2}\sum_{v\in V}\sum_{w\sim v} \frac{\mathcal{E}(v)}{deg(v)}+\frac{1}{2} \sum_{w\in V(G)}\sum_{v\sim w} \frac{\mathcal{E}(w)}{deg(v)}\\
&=&\frac{1}{2}\sum_{v\in V} \mathcal{E}(v)+\frac{1}{2} \sum_{w\in V} \mathcal{E}(w)
=\sum_{v\in V} \mathcal{E}(v)=\mathcal{E}(G).
\end{eqnarray*}

On the other hand, by the classical AM-GM inequality, if $e=(v,w)$,
$$\mathcal{E}(e)=
\frac{\mathcal{E}(v)}{deg(v)}+\frac{\mathcal{E}(w)}{deg(w)}\geq 2\sqrt{\frac{\mathcal{E}(v)\mathcal{E}(w)}{deg(v)deg(w)}}\geq2 \frac{1}{\sqrt{{deg(v)deg(w)}}},$$
where we used Theorem \ref{T1} in the last inequality.

Finally, summing over $e\in E(G)$ we obtain the desired inequality

$$\mathcal{E}(G)=\sum_{e\in E(G)} \mathcal{E}(e)\geq 2 \sum_{(v,w)\in E(G)} \frac{1}{\sqrt{{deg(v)deg(w)}}}=2R(G)$$

\end{proof}

Recall that a graph is called regular if all its vertices have the same degree. Since for any regular graph $R(G)=n/2$, we recover the following results from \cite{GFPR}.
\begin{cor}
Regular graphs are non-hypoenergetic.
\end{cor}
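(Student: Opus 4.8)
The plan is to derive the statement as an immediate consequence of Theorem \ref{mainT}, so the only real work is pinning down the Randic index of a regular graph. First I would confirm the value $R(G) = n/2$ quoted in the preceding sentence. If $G$ is $d$-regular on $n$ vertices, then by the handshake lemma it has exactly $nd/2$ edges, and every edge $(v,w)$ satisfies $deg(v) = deg(w) = d$, so it contributes $1/\sqrt{d\cdot d} = 1/d$ to the Randic sum. Hence $R(G) = (nd/2)\cdot(1/d) = n/2$, a value independent of the common degree $d$.

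With this computation in hand, I would simply invoke Theorem \ref{mainT} to obtain
$$\mathcal{E}(G) \;\geq\; 2R(G) \;=\; 2\cdot\frac{n}{2} \;=\; n.$$
Since a graph on $n$ vertices is by definition hypoenergetic exactly when $\mathcal{E}(G) < n$, the bound $\mathcal{E}(G)\geq n$ shows directly that $G$ cannot be hypoenergetic, which is the claim.

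Honestly there is no genuine obstacle here: all the analytic content sits inside Theorem \ref{mainT}, and ultimately in the vertex-energy inequality of Theorem \ref{T1}, while the corollary only adds the elementary degree bookkeeping above. The one point worth a moment's thought is the borderline case $\mathcal{E}(G) = n$ (the orderenergetic situation): by the equality characterization advertised in the abstract this would force $G$ to be a disjoint union of complete bipartite graphs, and regularity would further restrict these to balanced pieces $K_{m,m}$. Establishing the corollary as stated, however, requires none of this refinement.
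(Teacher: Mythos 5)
Your proposal is correct and follows exactly the paper's (implicit) argument: the corollary is stated with no proof beyond the remark that $R(G)=n/2$ for regular graphs, which combined with Theorem \ref{mainT} gives $\mathcal{E}(G)\geq n$. The only point worth noting is that your computation $R(G)=(nd/2)\cdot(1/d)=n/2$ tacitly assumes the common degree $d$ is positive (a $0$-regular graph has $R(G)=0$ and is hypoenergetic), the same standing assumption the paper makes in Theorem \ref{T9}.
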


A similar result may be derived for bipartite semi-regular graphs. These are graphs whose vertex set can be divided in two subsets $V_1$ and $V_2$, such that all the vertices in $V_1$ have degree $d_1$, all the vertices in $V_2$ have degree $d_2$ and any edge connects a vertex in $V_1$ with a vertex in $V_2$.
\begin{cor}
 Let $G$ be a bipartite semi-regular graph, with sizes $n_1$ and $n_2$ and degrees $d_1$ and $d_2$ then
$$\mathcal{E}(G)\geq n_1\frac{\sqrt{d_1}}{\sqrt{d_2}}+n_2\frac{\sqrt{d_1}}{\sqrt{d_2}}. $$
\end{cor}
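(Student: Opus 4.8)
The plan is to reduce the statement entirely to the main inequality $\mathcal{E}(G)\geq 2R(G)$ of Theorem \ref{mainT}, so that all that remains is to evaluate the Randic index of a bipartite semi-regular graph explicitly. Since every edge of such a $G$ joins a vertex of $V_1$ (of degree $d_1$) to a vertex of $V_2$ (of degree $d_2$), each summand in the definition of $R(G)$ equals $\frac{1}{\sqrt{d_1 d_2}}$. Hence $R(G)=\frac{|E|}{\sqrt{d_1 d_2}}$, and the whole problem collapses to counting the edges of $G$.

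The next step is to count $|E|$ in two ways. Summing the degrees over $V_1$ gives $|E|=n_1 d_1$, while summing over $V_2$ gives $|E|=n_2 d_2$; in particular $n_1 d_1 = n_2 d_2$. Substituting the first expression yields $R(G)=\frac{n_1 d_1}{\sqrt{d_1 d_2}}=n_1\frac{\sqrt{d_1}}{\sqrt{d_2}}$, and substituting the second yields $R(G)=n_2\frac{\sqrt{d_2}}{\sqrt{d_1}}$. Since these two expressions for $R(G)$ coincide, averaging them lets me write $2R(G)=n_1\frac{\sqrt{d_1}}{\sqrt{d_2}}+n_2\frac{\sqrt{d_2}}{\sqrt{d_1}}$. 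Applying Theorem \ref{mainT} then gives $\mathcal{E}(G)\geq 2R(G)=n_1\frac{\sqrt{d_1}}{\sqrt{d_2}}+n_2\frac{\sqrt{d_2}}{\sqrt{d_1}}$, which is the asserted bound.

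There is no genuine analytic obstacle here: the result is a direct corollary once the edge count $n_1 d_1 = n_2 d_2$ is in hand, and no new estimate beyond the main theorem is needed. The one point worth flagging is that the two-way edge count makes the symmetric form of the bound transparent, and it suggests that the second summand in the displayed statement should read $n_2\frac{\sqrt{d_2}}{\sqrt{d_1}}$ rather than $n_2\frac{\sqrt{d_1}}{\sqrt{d_2}}$; under the relation $n_1 d_1=n_2 d_2$ the two terms $n_1\frac{\sqrt{d_1}}{\sqrt{d_2}}$ and $n_2\frac{\sqrt{d_2}}{\sqrt{d_1}}$ are in fact equal, so each equals $R(G)$ and their sum is exactly $2R(G)$. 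I would therefore present the corrected right-hand side, noting that it reduces to $2R(G)$, and leave the verification of $n_1 d_1 = n_2 d_2$ as the only routine computation.
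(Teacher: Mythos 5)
Your proof is correct and follows exactly the route the paper intends: compute $R(G)=|E|/\sqrt{d_1d_2}=n_1\sqrt{d_1}/\sqrt{d_2}=n_2\sqrt{d_2}/\sqrt{d_1}$ via the two-way edge count and apply Theorem \ref{mainT}, which is precisely the computation the authors carry out later in the proof of their equality characterization. You are also right that the displayed statement contains a typo --- the second summand should be $n_2\frac{\sqrt{d_2}}{\sqrt{d_1}}$, as confirmed by the corrected form $\mathcal{E}(G)=n_A\frac{\sqrt{d_A}}{\sqrt{d_B}}+n_B\frac{\sqrt{d_B}}{\sqrt{d_A}}$ appearing in the paper's final proof.
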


In order to characterize the graphs where equality $\mathcal E(G)=2R(G)$ is attained, we need to use the following theorems from \cite{GFPR}, which also state when we have equality in the previous corollaries.

\begin{thm} \cite[Theorem 3]{GFPR}\label{T9}
 Let $G$ be a regular graph of size $n$, of degree $r$ , $r > 0$ . Then
\begin{equation}\label{Equ reg}
\mathcal E(G)\geq n.
\end{equation}
Equality in \eqref{Equ reg} is attained if and only if every component of $G$ is isomorphic to the
complete bipartite graph $K_{r,r}$.
\end{thm}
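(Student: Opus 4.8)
The plan is to get the inequality as a one-line consequence of the main theorem, and then to read off the equality case from a sharp spectral estimate. For the bound, I would first observe that in an $r$-regular graph every edge joins two vertices of degree $r$, so
$$R(G)=\sum_{(v,w)\in E}\frac{1}{\sqrt{r\cdot r}}=\frac{|E|}{r}=\frac{nr/2}{r}=\frac n2 ,$$
and Theorem \ref{mainT} gives $\mathcal E(G)\geq 2R(G)=n$ at once. (The same bound also drops out directly: since every eigenvalue satisfies $|\lambda_k|\leq r$ we have $\lambda_k^2\leq r|\lambda_k|$, and summing against $\sum_k\lambda_k^2=\mathrm{Tr}(A^2)=nr$ yields $nr\leq r\,\mathcal E(G)$.)

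For the equality case I would argue spectrally, as this pins down the structure most cleanly. Writing the eigenvalues of $A$ as $\lambda_1,\dots,\lambda_n$, the identities $\sum_k|\lambda_k|=\mathcal E(G)$ and $\sum_k\lambda_k^2=\mathrm{Tr}(A^2)=nr$, together with $|\lambda_k|\leq r$, give
$$\sum_k|\lambda_k|\bigl(r-|\lambda_k|\bigr)=r\,\mathcal E(G)-nr .$$
Every summand is nonnegative, so $\mathcal E(G)=n$ holds exactly when all of them vanish, i.e. when $\lambda_k\in\{-r,0,r\}$ for every $k$. This reduces the whole problem to classifying the $r$-regular graphs whose spectrum lies in $\{-r,0,r\}$.

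To finish I would first reduce to the connected case: energy and vertex count are additive over components, each component is again $r$-regular and satisfies $\mathcal E(C)\geq|C|$, so global equality forces equality on every component. For a connected $r$-regular graph the eigenvalue $r$ is simple by Perron--Frobenius, and $\mathrm{Tr}(A)=0$ then forces $-r$ to appear with the same multiplicity $1$; in particular $-r$ is an eigenvalue, so $G$ is bipartite. Feeding the spectrum $\{r,0^{\,n-2},-r\}$ back into $\sum_k\lambda_k^2=nr$ gives $2r^2=nr$, hence $n=2r$. A connected $r$-regular bipartite graph on $2r$ vertices must have both parts of size $r$, so each vertex is joined to all $r$ vertices on the other side and $G\cong K_{r,r}$. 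Conversely $K_{r,r}$ has spectrum $\{r,0^{\,2r-2},-r\}$, giving $\mathcal E(K_{r,r})=2r=|V|$, so equality indeed holds.

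I expect the genuine obstacle to be this last, structural step rather than the inequality: converting the purely spectral statement ``all eigenvalues lie in $\{-r,0,r\}$'' into the graph-theoretic conclusion $G\cong K_{r,r}$. It rests on the simplicity of the eigenvalue $r$ for connected regular graphs, on the trace identities to force both bipartiteness and $n=2r$, and on the remark that regularity together with equal-sized color classes of size $r$ leaves no freedom other than the complete bipartite graph.
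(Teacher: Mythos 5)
This theorem is not proved in the paper at all: it is imported verbatim as \cite[Theorem 3]{GFPR}, so there is no in-paper argument to compare against. Your proposal is a correct, self-contained proof of the cited result. The inequality via $R(G)=n/2$ and Theorem \ref{mainT} is valid and not circular (the proof of Theorem \ref{mainT} does not use Theorem \ref{T9}; only the later equality characterization does), and your parenthetical direct argument $\sum_k\lambda_k^2=nr\leq r\sum_k|\lambda_k|$ is in fact the classical route and keeps the statement independent of the rest of the paper, which is preferable. The equality analysis is also sound: $\sum_k|\lambda_k|(r-|\lambda_k|)=r\mathcal E(G)-nr$ with nonnegative summands forces the spectrum into $\{-r,0,r\}$; additivity over components reduces to the connected case; Perron--Frobenius gives multiplicity one for $r$, the trace condition forces $-r$ to occur with multiplicity exactly one (hence bipartiteness of the connected regular graph), $\mathrm{Tr}(A^2)=2r^2=nr$ gives $n=2r$, and the edge count $r|V_1|=r|V_2|$ pins both parts to size $r$, leaving only $K_{r,r}$. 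The one presentational gap is that you invoke without proof the standard facts that $|\lambda_k|\leq r$ for regular graphs and that a connected graph with $-\lambda_{\max}$ in its spectrum is bipartite; both are classical and acceptable to cite, but they are doing real work in the structural step you correctly identify as the crux.
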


\begin{thm}
\cite[Theorem 5]{GFPR} \label{T10} Let $G$ be a bipartite semi-regular with sizes $n_1$ and $n_2$ and degrees $d_1$ and $d_2$ then
\begin{equation}\label{Equ semireg} 
\mathcal{E}(G)\geq n_1\frac{\sqrt{d_1}}{\sqrt{d_2}}+n_2\frac{\sqrt{d_1}}{\sqrt{d_2}}. \end{equation}
Moreover, equality in \eqref{Equ semireg} is attained if and only if every component of $G$ is isomorphic to the complete bipartite graph $K_{d_1,d_2}$.
\end{thm}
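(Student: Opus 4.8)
The plan is to bypass vertex energies and work directly with the bipartite structure through singular values, which makes both the bound and the equality case transparent. Ordering the vertices so that the adjacency matrix takes the block form $A=\left(\begin{smallmatrix} 0 & B \\ B^t & 0\end{smallmatrix}\right)$, where $B$ is the $n_1\times n_2$ biadjacency matrix, the nonzero eigenvalues of $A$ are exactly the numbers $\pm\sigma_k$, with $\sigma_1\geq\sigma_2\geq\cdots$ the singular values of $B$. Hence $\mathcal{E}(G)=2\sum_k\sigma_k$, and the whole problem reduces to bounding $\sum_k\sigma_k$ from below.

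First I would record two spectral invariants. The Frobenius norm gives $\sum_k\sigma_k^2=\mathrm{tr}(BB^t)=\sum_{i,j}B_{ij}^2=|E|=n_1d_1=n_2d_2$, the last equalities coming from counting edges from each side. For the largest singular value, semiregularity gives $B\mathbf{1}_{n_2}=d_1\mathbf{1}_{n_1}$ and $B^t\mathbf{1}_{n_1}=d_2\mathbf{1}_{n_2}$, so $BB^t\mathbf{1}_{n_1}=d_1d_2\mathbf{1}_{n_1}$; since $\mathbf{1}_{n_1}$ is positive and $BB^t$ is nonnegative, Perron--Frobenius identifies $\sigma_1=\sqrt{d_1d_2}$ as the largest singular value.

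The inequality then follows from a one-line estimate: since $\sigma_k\leq\sigma_1$ for every $k$, we have $\sigma_k^2\leq\sigma_1\sigma_k$, and summing gives $\sum_k\sigma_k^2\leq\sigma_1\sum_k\sigma_k$, i.e. $\sum_k\sigma_k\geq(\sum_k\sigma_k^2)/\sigma_1=n_1d_1/\sqrt{d_1d_2}=n_1\sqrt{d_1/d_2}$. Doubling gives $\mathcal{E}(G)\geq 2n_1\sqrt{d_1/d_2}$, and using $n_1d_1=n_2d_2$ this equals $n_1\sqrt{d_1}/\sqrt{d_2}+n_2\sqrt{d_2}/\sqrt{d_1}$, the right-hand side of \eqref{Equ semireg}. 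I would note in passing that the inequality alone is also immediate from Theorem \ref{mainT}, since for a semiregular bipartite graph $R(G)=|E|/\sqrt{d_1d_2}$ is exactly one half of this right-hand side.

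For equality, the estimate $\sigma_k^2\leq\sigma_1\sigma_k$ is tight precisely when each $\sigma_k\in\{0,\sigma_1\}$, i.e. every nonzero singular value of $B$ equals $\sqrt{d_1d_2}$. Here I would pass to connected components: each component of a semiregular graph is again semiregular with the same degrees, hence has the same top singular value $\sqrt{d_1d_2}$, and the global estimate is the sum of the componentwise ones, so global equality forces equality on each component. Fixing a connected component $H$, connectedness makes $B_HB_H^t$ irreducible, so by Perron--Frobenius the eigenvalue $d_1d_2$ is simple; combined with ``all nonzero singular values equal $\sqrt{d_1d_2}$'' this forces $\mathrm{rank}(B_H)=1$. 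A rank-one $0/1$ matrix is the all-ones matrix on a combinatorial rectangle, and the constant row and column sums $d_1,d_2$ pin the rectangle down to size $d_2\times d_1$, i.e. $H=K_{d_1,d_2}$; the converse is a direct check that $K_{d_1,d_2}$ has a single nonzero singular value $\sqrt{d_1d_2}$. The hard part will be this equality analysis, and in particular two points: justifying that global equality descends to each component (which rests on every component sharing the top value $\sqrt{d_1d_2}$), and the Perron--Frobenius simplicity step that upgrades ``all nonzero singular values equal'' to ``rank one.'' Everything else, including the inequality itself, is routine once the singular-value dictionary is in place.
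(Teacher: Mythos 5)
Your argument is correct and complete, but there is nothing in the paper to compare it against: the paper does not prove this statement, it imports it verbatim as \cite[Theorem~5]{GFPR}. What you have written is a self-contained proof of the cited result, and it is essentially the standard one from that literature: the singular-value dictionary $\mathcal{E}(G)=2\sum_k\sigma_k$ for bipartite graphs, the second-moment identity $\sum_k\sigma_k^2=|E|$, the Perron--Frobenius identification $\sigma_1=\sqrt{d_1d_2}$ from the positive eigenvector $\mathbf{1}$, and the estimate $\sum_k\sigma_k\geq(\sum_k\sigma_k^2)/\sigma_1$ (the bipartite semiregular analogue of the $\mathcal{E}(G)\geq 2m/\lambda_1$ bound used in \cite{GFPR}). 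Your equality analysis is also sound: equality forces every nonzero singular value to equal $\sigma_1$, the condition passes to components because each component is semiregular with the same degrees, irreducibility of $B_HB_H^t$ for a connected component makes $d_1d_2$ a simple eigenvalue and hence forces $\mathrm{rank}(B_H)=1$, and a rank-one $0/1$ biadjacency matrix with constant row and column sums is exactly $K_{d_1,d_2}$. Two small remarks: your derivation tacitly assumes $d_1,d_2>0$ (as does the parallel Theorem~\ref{T9}, which states $r>0$ explicitly), and you have silently corrected a typo in the paper's display \eqref{Equ semireg}, whose second summand should read $n_2\sqrt{d_2}/\sqrt{d_1}$ rather than $n_2\sqrt{d_1}/\sqrt{d_2}$ --- your identity $n_1\sqrt{d_1/d_2}=n_2\sqrt{d_2/d_1}$, a consequence of $n_1d_1=n_2d_2$, shows the corrected right-hand side is what your bound $2n_1\sqrt{d_1/d_2}$ actually yields.
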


\begin{thm}
 $\mathcal{E}(G)= 2R(G)$ if and only if $G$ is a disjoint union of complete bipartite graphs.
\end{thm}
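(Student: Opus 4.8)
The plan is to analyze exactly when the two inequalities used in the proof of Theorem \ref{mainT} become equalities, since $\mathcal{E}(G)=2R(G)$ forces equality in every step of that argument. There were two inequalities: the AM-GM step $\frac{\mathcal{E}(v)}{deg(v)}+\frac{\mathcal{E}(w)}{deg(w)}\geq 2\sqrt{\frac{\mathcal{E}(v)\mathcal{E}(w)}{deg(v)deg(w)}}$, and the application of Theorem \ref{T1}, namely $\mathcal{E}(v)\mathcal{E}(w)\geq 1$. For the global sum to attain equality, both must hold with equality simultaneously \emph{for every edge} $(v,w)\in E$. First I would record what each of these forces. Equality in AM-GM gives $\frac{\mathcal{E}(v)}{deg(v)}=\frac{\mathcal{E}(w)}{deg(w)}$ for every edge, and equality in Theorem \ref{T1} gives $\mathcal{E}(v)\mathcal{E}(w)=1$ for every edge. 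Combining these two for an edge $(v,w)$ yields $\mathcal{E}(v)=\frac{1}{\sqrt{deg(v)deg(w)}}\cdot deg(v) = \sqrt{deg(v)/deg(w)}$, and symmetrically for $w$; multiplying recovers $\mathcal{E}(v)\mathcal{E}(w)=1$ consistently.

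Next I would propagate this across each connected component. The relation $\frac{\mathcal{E}(v)}{deg(v)}=\frac{\mathcal{E}(w)}{deg(w)}$ holding along every edge means the quantity $c(v):=\mathcal{E}(v)/deg(v)$ is constant on each connected component. Fix a component and call this common value $c$. Then for every edge $(v,w)$ inside it, the equality $\mathcal{E}(v)\mathcal{E}(w)=1$ becomes $c^2\, deg(v)\, deg(w)=1$, so $deg(v)\,deg(w)=1/c^2$ is the same for all edges. The key structural consequence I would extract is that this constrains the degree sequence sharply: if $v$ has neighbours $w$ and $w'$, then $deg(w)=deg(w')=1/(c^2 deg(v))$, so all neighbours of a given vertex share a common degree. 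I expect this to force each component to be bipartite semi-regular (or regular): partitioning a component by the two possible degree values that can appear, every edge joins the two classes, giving the bipartite semi-regular structure of Theorem \ref{T10}, with the regular case as the degenerate instance covered by Theorem \ref{T9}.

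Once a component is identified as regular or bipartite semi-regular, I would invoke Theorems \ref{T9} and \ref{T10} to finish. The point is that equality $\mathcal{E}(G)=2R(G)$ restricted to such a component coincides exactly with the equality case in \eqref{Equ reg} or \eqref{Equ semireg}: for a $d$-regular graph $R=n/2$ so $2R=n$ and Theorem \ref{T9} says $\mathcal{E}=n$ iff each component is $K_{d,d}$; for the bipartite semi-regular case a short computation shows $2R(G)$ equals the right-hand side of \eqref{Equ semireg}, so Theorem \ref{T10} gives that equality forces each component to be $K_{d_1,d_2}$. Either way every component is a complete bipartite graph, establishing the forward direction. The converse is the easy direction: I would simply verify directly that for $G=K_{a,b}$ one has $\mathcal{E}(K_{a,b})=2\sqrt{ab}$ and $R(K_{a,b})=ab\cdot\frac{1}{\sqrt{ab}}=\sqrt{ab}$, and that both $\mathcal{E}$ and $R$ are additive over disjoint unions, so equality holds for any disjoint union of complete bipartite graphs.

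The main obstacle I anticipate is the structural deduction in the second paragraph: translating the pointwise equality conditions into the global claim that each component is genuinely regular or bipartite semi-regular. The delicate points are confirming that only two distinct degree values can occur in a connected component under the constraint $deg(v)deg(w)=1/c^2$, and handling the case where a component might a priori fail to be bipartite (one must rule out odd cycles forcing a contradiction in the alternating degree values, or absorb it into the regular case). Verifying that $2R$ matches the Gutman--Furtula--Pavlović--Radenković bounds exactly in the semi-regular case is a routine but necessary computation that bridges our inequality to their equality characterizations.
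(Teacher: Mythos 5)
Your proposal is correct and follows essentially the same route as the paper: extract the equality conditions from the AM--GM step and from Theorem \ref{T1} on every edge, deduce that each component is regular or bipartite semi-regular, and then invoke the equality cases of Theorems \ref{T9} and \ref{T10}. Your reformulation via the component-wise constant $c(v)=\mathcal{E}(v)/deg(v)$ and the resulting constant edge-product $deg(v)deg(w)=1/c^2$ is just a cleaner packaging of the paper's observation that $\mathcal{E}(v)=\sqrt{deg(v)/deg(w)}$ is independent of the neighbour $w$, and your explicit check of the converse for $K_{a,b}$ is a small addition the paper leaves implicit.
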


\begin{proof}
Since the Randic index and the graph energy are additive with respect to connected components, we may assume without loss of generality that $G$ is connected.

If $\mathcal{E}(G)= 2R(G)$ then all the inequalities used in the proof of theorem \ref{mainT} must become equalities. In particular, for vertices $v$ and $w$, we must satisfy 

$$\frac{\mathcal{E}(v)}{deg(v)}+\frac{\mathcal{E}(w)}{deg(w)}= 2\sqrt{\frac{\mathcal{E}(v)\mathcal{E}(w)}{deg(v)deg(w)}}$$
which by AM-GM, is only possible if $$\frac{\mathcal{E}(v)}{deg(v)}=\frac{\mathcal{E}(w)}{deg(w)}.$$
This, together with the condition $\mathcal{E}(v)\mathcal{E}(w)=1,$ implies that
$$\mathcal{E}(v)=\frac{\sqrt{deg(v)}}{\sqrt{deg(w)}}.$$
Since the left hand-side does not depend of $w$, then all the neighbours of $v$ have the same degree as $w$. Similarly, all the neighbours of $w$ have the same degree as $v$. Continuing in this way, one sees that we have two possibilities.

If $deg(v)=deg(w)$, then all the vertices have the same degree, i.e. $G$ is a $d$-regular graph.  In this case $R(G)=n/2$ and in order that $\mathcal{E}(G)=2R(G)=n$, by Theorem \ref{T9}, $G$ must be $K_{r,r}$ for some $r$.

Otherwise, if $deg(v)\neq deg(w)$,  we
can divide the set of vertices in two sets $A$ and $B$, such that all the vertices in $A$ have degree $d_A$ and all the vertices in $B$ have degree $d_B$, and any edge connects a vertex in $A$ with a vertex in $B$. This means that $G$ is a bipartite semi-regular graph.

For this bipartite semi-regular graph, if we denote by $n_A$ and $n_B$ the sizes of $A$ and $B$, respectively, the Randic index may be calculated by
$$R(G)=n_A\frac{\sqrt{d_A}}{\sqrt{d_B}}=n_B\frac{\sqrt{d_B}}{\sqrt{d_A}}.$$
Now, in order for $\mathcal E(G)=2R(G)$ we need 
$$\mathcal{E}(G)=n_A\frac{\sqrt{d_A}}{\sqrt{d_B}}+n_B\frac{\sqrt{d_B}}{\sqrt{d_A}},$$ 
which is only possible if $G$ is a complete bipartite graph $K_{d_A,d_B}$, by  Theorem \ref{T10}.

\end{proof}

\noindent Department of Actuarial Sciences, Physics and Mathematics. Universidad de las Am\'{e}ricas Puebla. San Andr\'{e}s Cholula, Puebla. M\'{e}xico.\\\noindent Email: \emph{gerardo.arizmendi@udlap.mx}
	\\~
	
\noindent Department of Probability and Statistics. Centro de Investigaci\'{o}n en Matem\'aticas, Guanajuato, M\'{e}xico. \\\noindent Email: \emph{octavius@cimat.mx}

\begin{thebibliography}{100}


\bibitem{AGK} Akbari, S., Ghahremani, M., Gutman, I., \& Koorepazan-Moftakhar, F. (2020). Orderenergetic graphs, MATCH Commun. Math. Comput. Chem., 84(2), 325-334.
    
 \bibitem{AFJ} Arizmendi, O., Fernandez J., \& Juarez-Romero, O., (2018) Energy of a vertex . Linear Algebra and its Applications Volume 557, 464--495 

\bibitem{AJ} Arizmendi, O., \& Juarez-Romero, O. On bounds for the energy of graphs and digraphs.  Contributions of Mexican mathematicians abroad in pure and applied mathematics, 1–19, Contemp. Math., 709, Aportaciones Mat., Amer. Math. Soc., Providence, RI, 2018.

\bibitem{Gut} Gutman, I. (1978). The Energy of a graph. Berichte der Mathematische Statistischen Sektion im Forschungszentrum Graz. 103, 1-22.

\bibitem{Gut2} I. Gutman, in Hyperenergetic and Hypoenergetic Graphs, ed. by D. Cvetkovic, I. Gutman. Selected Topics on Applications of Graph Spectra (Mathematical Institute, Belgrade, 2011), pp. 113--135

\bibitem{GFPR} Gutman, I., Firoozabadi, S. Z., de la Pe\~na, J. A., \& Rada, J. (2007). On the energy of regular graphs. MATCH Commun. Math. Comput. Chem, 57(2), 435-442.


\bibitem{Jah}Jahanbani, A. (2018). Lower bounds for the energy of graphs. AKCE International Journal of Graphs and Combinatorics, 15(1), 88-96.

\bibitem{GR}Gutman, I., \&  Ramane, H. (2020)  Research on graph energies in 2019, MATCH Commun. Math. Comput. Chem. 84  277-292.

\bibitem{LSG} Li, X., Shi, Y,. \& Gutman, I., Graph Energy, Springer, New York, 2012.		

\bibitem{Ran} Randic, M. (1975). Characterization of molecular branching. Journal of the American Chemical Society, 97(23), 6609-6615.

	\end{thebibliography}
\end{document}